\newtheorem{theorem}{Theorem}[section]
\newtheorem{lemma}[theorem]{Lemma}
\newtheorem{corollary}[theorem]{Corollary}
\newtheorem*{thm1}{Theorem}
\newtheorem*{thm3}{Theorem}
\newtheorem*{thm4}{Theorem}
\newcommand\cref[1]{Corollary~\ref{cor:#1}}
\DeclareMathOperator{\VP}{VP}
\title{Symmetric Graphs with respect to Graph Entropy}
\author{Seyed Saeed Changiz Rezaei, Ehsan Chiniforooshan}
\begin{document}

\author{Seyed Saeed Changiz Rezaei\\[1mm]
Department of Mathematics\\
Simon Fraser University\\
Burnaby, BC, Canada\\
  {\tt sschangi@sfu.ca}
\and
Ehsan Chiniforooshan\\[1mm]
Google Inc.\\
Waterloo, ON, Canada\\
 {\tt chiniforooshan@google.com}
}

\maketitle

\begin{abstract}
 Let $F_G(P)$ be a functional defined on the set of all the probability distributions on the vertex set of a graph $G$. We say that $G$ is \emph{symmetric with respect to $F_G(P)$} if the uniform distribution on $V(G)$ maximizes $F_G(P)$. Using the combinatorial definition of the entropy of a graph in terms of its vertex packing polytope and the relationship between the graph entropy and fractional chromatic number, we characterize all  graphs which are symmetric with respect to graph entropy. We show that a graph is symmetric with respect to graph entropy if and only if its vertex set can be uniformly covered by its maximum size independent sets. Furthermore, given any strictly positive probability distribution $P$ on the vertex set of a graph $G$, we show that  $P$ is a maximizer of the entropy of graph $G$ if and only if its vertex set can be uniformly covered by its maximum weighted independent sets. We also show that the problem of deciding if a graph is symmetric with respect to graph entropy, where the weight of the vertices is given by probability distribution $P$, is co-NP-hard.
\end{abstract}

\section{Introduction}
The entropy of a graph is an information theoretic functional which is defined on a graph with a probability distribution on its vertex set. This functional was originally proposed by J. K\"{o}rner in 1973 to study the minimum number of codewords required for representing an information source~\cite{JKor}.

Let $VP(G)$ be the \emph{vertex packing polytope} of a given graph $G$ which is the convex hull of the characteristic vectors of its independent sets. Let $ n := |V(G)|$ and $P$ be a probability distribution on $V(G)$. Then the \emph{entropy of $G$ with respect to the probability distribution $P$} is defined as
\[
H(G,P) = \min_{\mathbf{a}\in VP(G)} \sum_{v\in V(G)} p_v\log (1/a_v).\label{eq:combent}
\]

J. K\"{o}rner investigated the basic properties of the graph entropy in several papers from 1973 till 1992~\cite{JKor,JKor01,JKor11,JKor1,JKor3,Jkor4,JKor2}.

Let $F$ and $G$ be two graphs on the same vertex set $V$. Then the union of graphs $F$ and $G$ is the graph $F\cup G$ with vertex set $V$ and its edge set is the union of the edge set of graph $F$ and the edge set of graph $G$. That is
\begin{eqnarray}
&&V\left(F\cup G\right) = V,\nonumber\\
&&E\left(F\cup G\right) = E\left(F\right)\cup E\left(G\right).\nonumber
\end{eqnarray}
Perhaps, the most important property of the entropy of a graph is that it is sub-additive with respect to the union of graphs, that is
\[
H\left(F\cup G,P\right)\leq H\left(F,P\right) + H\left(G,P\right).
\]
This leads to the application of graph entropy for graph covering problem as well as the problem of perfect hashing.

The graph covering problem can be described as follows. Given a graph $G$ and a family of graphs $\mathcal G$ where each graph $G_i\in \mathcal G$ has the same vertex set as $G$, we want to cover the edge set of $G$ with the minimum number of graphs from $\mathcal G$. Using the sub-additivity of graph entropy one can obtain lower bounds on this number.

Graph entropy was used implicitly in a paper by Fredman and Koml\'{o}s for the minimum number of perfect hash functions of a given range that hash all $k$-element subsets of a set of a given size~\cite{FK}.

Simonyi showed that the maximum of the graph entropy of a given graph over the probability distribution on its vertex set is equal to the logarithm of its fractional chromatic number~\cite{Simu}. In this paper, we characterize all strictly positive probability distributions which maximize the entropy of a given graph. 

Let $\mathcal S$ be a multi-set of independent sets of a graph $G$. We say $\mathcal S$ is \emph{uniform} over a subset of vertices $W$ of the vertex set of $G$ if each vertex $v\in W$ is covered by a constant number of independent sets in $\mathcal S$.
Then, our main result can be stated as follows.
\begin{thm1}
For every graph $G$ and every probability distribution $P$ over $V(G)$, we have $H(G, P) = \lg\chi_f(G[\{v\in V(G):p_v>0\}])$
if and only if there exists a multi-set of independent sets 
  $\cal S$ such that
  \begin{enumerate}
    \item $\cal S$ is uniform over $\{v\in V(G):p_v > 0\}$, and
    \item every independent set $I\in\mathcal{S}$ is a maximum weighted independent
          sets with respect to $P$.
   \end{enumerate}
\end{thm1}

We say a graph is \emph{symmetric with respect to graph entropy} if the uniform probability distribution maximizes its entropy. It is worth noting that the notion of a symmetric graph with respect to a functional was already defined by G. Greco~\cite{Greco}. Furthermore, S.S. C. Rezaei and C. Godsil studied some classes of graphs which are symmetric with respect to graph entropy~\cite{Saeed,Saeed-Chris}. A corollary of the above-mentioned theorem is the following characterization for symmetric graphs.

\begin{thm3}
A graph $G$ is symmetric if and only if $\chi_f(G) = {n\over\alpha(G)}$.
\end{thm3}

Finally we consider the complexity of deciding whether a graph is symmetric with respect to its entropy by proving the following theorem.

\begin{thm4}
It is co-NP-hard to decide whether a given graph $G$ is symmetric.
\end{thm4}

\section{Preliminaries}
Here we recall some properties of entropy of graphs. The following lemma shows the monotonicity for graph entropy.  
\begin{lemma}\emph{(J. K\"{o}rner).}\label{lem:mono}
Let $F$ be a spanning subgraph of a graph $G$. Then for any probability distribution $P$ we have $H(F,P)\leq H(G,P)$.
\end{lemma}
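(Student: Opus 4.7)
The plan is to derive the inequality directly from the combinatorial definition of graph entropy by showing that $VP(G)\subseteq VP(F)$, and then observing that minimizing the same objective over a larger polytope can only decrease the value.

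First I would unpack the hypothesis. Since $F$ is a spanning subgraph of $G$, we have $V(F)=V(G)$ and $E(F)\subseteq E(G)$. The key structural observation is that independence in $G$ is more restrictive than independence in $F$: any set $I\subseteq V(G)$ that contains no edge of $G$ also contains no edge of $F$. Consequently, every independent set of $G$ is an independent set of $F$, so the set of characteristic vectors of independent sets of $G$ is contained in the corresponding set for $F$. Taking the convex hull of both sides preserves this inclusion, yielding $VP(G)\subseteq VP(F)$.

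Next I would apply the definition
\[
H(G,P) \;=\; \min_{\mathbf{a}\in VP(G)} \sum_{v\in V(G)} p_v\log(1/a_v),
\]
and similarly for $H(F,P)$. Both expressions minimize exactly the same objective function $\sum_v p_v\log(1/a_v)$ over $\mathbf{a}$, the only difference being the feasible region. Because $VP(G)\subseteq VP(F)$, every minimizer for $H(G,P)$ is feasible for $H(F,P)$, so the minimum over $VP(F)$ is no larger than the minimum over $VP(G)$. This gives $H(F,P)\leq H(G,P)$, as required.

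There is essentially no serious obstacle here: the lemma is a direct consequence of the definition once the inclusion $VP(G)\subseteq VP(F)$ is in hand. The only subtle point worth flagging is the direction of the inclusion, which reverses the edge inclusion $E(F)\subseteq E(G)$; this reversal is exactly what makes the entropy monotone in the edge set rather than anti-monotone.
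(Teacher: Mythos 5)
Your proof is correct: since $F$ is spanning with $E(F)\subseteq E(G)$, every independent set of $G$ is independent in $F$, so $VP(G)\subseteq VP(F)$, and minimizing the same objective over the larger polytope can only decrease the value, giving $H(F,P)\leq H(G,P)$. The paper itself states this lemma without proof, attributing it to K\"orner; your argument is the standard one and is exactly what the combinatorial definition of $H(G,P)$ used in the paper yields, so there is nothing to fault.
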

The notion of substitution is defined as follows. Let $F$ and $G$ be two vertex disjoint graphs and $v$ be a vertex of $G$. We substitute $F$ for $v$ by deleting $v$ and joining all vertices of $F$ to those vertices of $G$ which have been adjacent with $v$. Let $G_{v\leftarrow F}$ be the resulting graph. 

We extend the notion of substitution to distributions. Let $P$ and $Q$ be the probability distributions on $V(G)$ and $V(F)$, respectively. Then the probability distribution $P_{v\leftarrow Q} $ on $V\left(G_{v\leftarrow F}\right)$ is given by $P_{v\leftarrow Q}(x) = P(x)$ if $x \in V(G) \setminus \{v\}$ and $P_{v\leftarrow Q}(x) = P(x) Q(x)$ if $x \in V(F)$. 

Now we state the following lemma which was proved in J. K\"{o}rner, et. al. \cite{JKor2}.

\begin{lemma}\emph{(J. K\"{o}rner, G. Simonyi, and Zs.~Tuza).}\label{lem:Subs}
Let $F$ and $G$ be two vertex disjoint graphs, $v$ a vertex of $G$, while $P$ and $Q$ are probability distributions on $V(G) $ and $V(F)$, respectively. Then we have
\[
H\left(G_{v\leftarrow F}, P_{v\leftarrow Q}\right) = H\left(G,P\right) + P(v)H\left(F,Q\right).
\]
\end{lemma}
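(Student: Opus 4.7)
\medskip

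\noindent\textbf{Proof proposal.} The plan is to work directly with the polyhedral definition of graph entropy, and prove the two inequalities $\leq$ and $\geq$ by translating convex decompositions on one side into convex decompositions on the other. The key structural observation is the following description of $VP(G_{v\leftarrow F})$: a subset $K\subseteq (V(G)\setminus\{v\})\cup V(F)$ is independent in $G_{v\leftarrow F}$ if and only if $K\cap V(F)$ is independent in $F$ and, writing $K_G = K\cap (V(G)\setminus\{v\})$, either $K\cap V(F)=\emptyset$ and $K_G$ is independent in $G$, or $K\cap V(F)\neq\emptyset$ and $K_G\cup\{v\}$ is independent in $G$ (because the edges of $G_{v\leftarrow F}$ between $V(G)\setminus\{v\}$ and $V(F)$ are exactly those inherited from the neighbors of $v$ in $G$).

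For the $\leq$ direction I would take $\mathbf{a}\in VP(G)$ optimal for $H(G,P)$ and $\mathbf{b}\in VP(F)$ optimal for $H(F,Q)$, and define $\mathbf{c}$ on $V(G_{v\leftarrow F})$ by $c_u=a_u$ for $u\in V(G)\setminus\{v\}$ and $c_w=a_v b_w$ for $w\in V(F)$. To see that $\mathbf{c}\in VP(G_{v\leftarrow F})$, decompose $\mathbf{a}=\sum_i\lambda_i\chi_{I_i}$ and $\mathbf{b}=\sum_j\mu_j\chi_{J_j}$, then form a decomposition of $\mathbf{c}$ by using the independent set $I_i$ with weight $\lambda_i$ when $v\notin I_i$, and the independent set $(I_i\setminus\{v\})\cup J_j$ with weight $\lambda_i\mu_j$ when $v\in I_i$. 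These are independent in $G_{v\leftarrow F}$ by the structural observation above, the coordinate sums reproduce $\mathbf{c}$, and the total weight is $\sum_i\lambda_i\leq 1$. A direct calculation using $\log(1/(a_vb_w))=\log(1/a_v)+\log(1/b_w)$ and $\sum_w Q(w)=1$ then yields
\[
\sum_u P_{v\leftarrow Q}(u)\log(1/c_u)=H(G,P)+P(v)H(F,Q),
\]
which bounds $H(G_{v\leftarrow F},P_{v\leftarrow Q})$ from above by the right-hand side of the lemma.

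For the $\geq$ direction I would start with an optimal $\mathbf{c}\in VP(G_{v\leftarrow F})$ decomposed as $\mathbf{c}=\sum_k\gamma_k\chi_{K_k}$, and ``reverse'' the above construction. Define an independent set $I_k$ of $G$ by $I_k=K_k\cap(V(G)\setminus\{v\})$ when $K_k\cap V(F)=\emptyset$ and $I_k=(K_k\cap(V(G)\setminus\{v\}))\cup\{v\}$ otherwise; set $\mathbf{a}=\sum_k\gamma_k\chi_{I_k}\in VP(G)$, so that $a_u=c_u$ for $u\neq v$ and $a_v=\sum_{k:K_k\cap V(F)\neq\emptyset}\gamma_k$. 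If $a_v>0$, define $b_w=c_w/a_v$ on $V(F)$; the decomposition $\mathbf{b}=\sum_{k:K_k\cap V(F)\neq\emptyset}(\gamma_k/a_v)\chi_{K_k\cap V(F)}$ exhibits $\mathbf{b}\in VP(F)$. The same algebraic identity then gives
\[
\sum_u P_{v\leftarrow Q}(u)\log(1/c_u)=\sum_{u\in V(G)}P(u)\log(1/a_u)+P(v)\sum_{w\in V(F)}Q(w)\log(1/b_w),
\]
which is $\geq H(G,P)+P(v)H(F,Q)$ by definition of entropy.

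The main subtlety, and the part I expect to require the most care, is the degenerate case $a_v=0$, in which $b_w$ is not defined by $c_w/a_v$. However, $a_v=0$ forces $c_w=0$ for all $w\in V(F)$, and then optimality of $\mathbf{c}$ together with $P_{v\leftarrow Q}(w)=P(v)Q(w)$ forces $P(v)=0$ (or else the objective is $+\infty$, contradicting the finite upper bound already established). When $P(v)=0$ the term $P(v)H(F,Q)$ vanishes and any $\mathbf{b}\in VP(F)$ works, so the inequality reduces to $H(G_{v\leftarrow F},P_{v\leftarrow Q})\geq H(G,P)$, which follows from $\mathbf{a}\in VP(G)$ and $a_u=c_u$ on $\mathrm{supp}(P)$. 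Combining the two inequalities gives the stated equality.
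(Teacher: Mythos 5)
Your proposal is correct, but note that the paper itself does not prove this lemma at all: it is imported verbatim from K\"orner, Simonyi and Tuza \cite{JKor2} and used as a black box (in the proof of the converse theorem, via repeated substitution of independent sets $F_v$). So the comparison is between your self-contained argument and the cited literature rather than anything in this paper. Your route is the natural polyhedral one, matching the combinatorial definition $H(G,P)=\min_{\mathbf{a}\in VP(G)}\sum_v p_v\log(1/a_v)$ that the paper works with throughout: the structural description of independent sets of $G_{v\leftarrow F}$ is right (edges inside $V(G)\setminus\{v\}$ and inside $V(F)$ are inherited, and cross edges go exactly to $N_G(v)$), the product construction $c_w=a_vb_w$ together with the mixed decomposition $(I_i\setminus\{v\})\cup J_j$ with weights $\lambda_i\mu_j$ gives the upper bound, and the reverse map $K_k\mapsto I_k$ with $b_w=c_w/a_v$ gives the lower bound; you also correctly isolate and dispose of the only delicate point, $a_v=0$, by observing that finiteness of the optimum (guaranteed by the upper-bound direction) forces $P(v)=0$ there, after which the inequality degenerates to $H(G_{v\leftarrow F},P_{v\leftarrow Q})\geq H(G,P)$. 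Two cosmetic remarks: in the upper-bound direction the new weights sum to exactly $\sum_i\lambda_i=1$, not merely at most $1$ (harmless either way, since the zero vector of the empty independent set is available); and, as with all such arguments, the identities should be read with the usual convention $0\cdot\log(1/0)=0$, which is exactly what your degenerate-case discussion amounts to. What your write-up buys, relative to the paper, is that the substitution lemma is proved with the same polytope machinery as the rest of the paper instead of being an external citation.
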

Notice that the entropy of an empty graph (a graph with no edges) is always zero (regardless of the distribution on its vertices).
\section{Graph entropy and fractional chromatic number}
G. Simonyi established the relationship between the entropy of a graph $H(G, P)$ with its fractional chromatic number $\chi_f(G)$ by showing the following \cite{Sim,Simu}.
\[
\max_P H(G,P) = \log\chi_f(G)
\]

Here we characterize strictly positive probability distributions which maximize $H(G, P)$.\\
First we recall a characterization of uniform independent set covers using fractional chromatic number. A $b$-fold coloring of the vertices of a graph $G$ is an assignment of $b$-subsets of a set with $a$ elements such that adjacent vertices get disjoint $b$-subsets. The least $a$ such that $G$ admits a $b$-fold coloring is called the $b$-fold chromatic number of $G$ and is denoted by $\chi_b(G)$.

\begin{theorem}\label{thm:fracChar}(\cite{fgt})
For every graph $G$ and integer $b$
\[
\chi_f(G) \leq \frac{\chi_b(G)}{b}.
\]
Furthormore, there exists an integer that realizes the equality.
\end{theorem}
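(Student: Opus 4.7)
The plan is to prove the two assertions of the theorem separately, each by a direct translation between $b$-fold colorings and fractional covers by independent sets.

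For the inequality $\chi_f(G)\le \chi_b(G)/b$, I would start from an optimal $b$-fold coloring of $G$ using $a=\chi_b(G)$ colors. For each color $c\in\{1,\dots,a\}$, let $I_c$ be the set of vertices that receive color $c$; since adjacent vertices receive disjoint $b$-subsets, each $I_c$ is an independent set. Assigning the weight $\lambda_{I_c}=1/b$ to $I_c$ yields, for each vertex $v$, $\sum_{c:v\in I_c}\lambda_{I_c}=b\cdot(1/b)=1$, because $v$ is assigned exactly $b$ colors. This is a feasible fractional cover of $V(G)$ by independent sets of total weight $a/b$, so $\chi_f(G)\le a/b=\chi_b(G)/b$.

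For the equality part, I would exploit the rationality of the fractional chromatic number. The LP defining $\chi_f(G)$ (minimize $\sum_I \lambda_I$ subject to $\sum_{I\ni v}\lambda_I\ge 1$ for all $v$ and $\lambda_I\ge 0$) has rational data, so there exists an optimum with rational weights $\lambda_I^\ast$ on the independent sets. Let $b$ be a common denominator, and write $\lambda_I^\ast=m_I/b$ with $m_I\in\mathbb{Z}_{\ge 0}$. For each independent set $I$, introduce $m_I$ distinct new colors and assign all of them to every vertex of $I$. Each vertex $v$ then receives $\sum_{I\ni v} m_I = b\sum_{I\ni v}\lambda_I^\ast = b$ colors (after noting that at an optimum the constraints are tight for every $v$ that appears in some positively-weighted $I$; otherwise reduce weights to meet equality). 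Adjacent vertices get disjoint color sets because their color sets come from independent sets that exclude at least one of them. This constructs a $b$-fold coloring with $\sum_I m_I = b\chi_f(G)$ colors, giving $\chi_b(G)\le b\chi_f(G)$, which combined with the first part yields equality.

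The one subtlety worth flagging is ensuring the cover is exact (weight exactly $1$ at every $v$) before clearing denominators, since $b$-fold colorings demand exactly $b$ colors per vertex. This can be handled by noting that any optimal fractional cover can be adjusted so the constraint $\sum_{I\ni v}\lambda_I=1$ holds at every $v$ present in the support (by trimming excess weight from any $I$ contributing to an over-covered vertex, using the singleton independent sets as slack if needed). Aside from this bookkeeping step, the argument is a standard LP-rationality plus denominator-clearing argument, and no deeper obstacle is anticipated.
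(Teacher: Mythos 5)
Your argument is correct, and it is the standard LP-rationality proof of this fact; note that the paper itself offers no proof of Theorem~\ref{thm:fracChar}, citing it directly from the fractional graph theory literature \cite{fgt}, so there is nothing in the paper to diverge from. One small simplification: the ``exactness'' subtlety you flag can be dispatched without adjusting the optimal fractional cover at all, since if a vertex receives more than $b$ colors you may simply discard the surplus and keep any $b$ of them (adjacent vertices' color sets remain disjoint and the total number of colors used is still $\sum_I m_I = b\chi_f(G)$); alternatively, your trimming step should be phrased as shifting weight from $I$ to the independent set $I\setminus\{v\}$ rather than merely reducing $\lambda_I$, since the latter would either break feasibility at another vertex of $I$ or contradict optimality.
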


As a corollary to the above theorem we have 
\begin{corollary}\label{thm:uniformCover}
Let $\mathcal S$ be a multi-set of independent sets of $G$. Then each element of $\mathcal S$ induces a maximum independent set of $G$ with size $\alpha(G)$ and $\mathcal S$ is a uniform cover the vertices of $G$ if and only if $\chi_f(G) = \frac{n}{\alpha(G)}$.
\end{corollary}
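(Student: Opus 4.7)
The plan is to exploit the characterization of the fractional chromatic number via $b$-fold chromatic numbers (Theorem~\ref{thm:fracChar}) together with the standard bound $\chi_f(G)\geq n/\alpha(G)$, which follows from the fact that any fractional coloring has total weight at least $n/\alpha(G)$ since each independent set used has size at most $\alpha(G)$. With this in hand, both directions will reduce to a simple double counting of incidences between vertices and sets.

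For the ``only if'' direction, assume $\chi_f(G)=n/\alpha(G)$. By Theorem~\ref{thm:fracChar} there exists an integer $b$ with $\chi_b(G)/b=\chi_f(G)$, so $\chi_b(G)=bn/\alpha(G)$. Fix a $b$-fold coloring with $a:=\chi_b(G)$ colors and let $\mathcal{S}$ be the multi-set of its $a$ color classes, each of which is independent. By definition of a $b$-fold coloring, every vertex lies in exactly $b$ members of $\mathcal{S}$, so $\mathcal{S}$ is a uniform cover. Counting vertex-set incidences two ways gives $\sum_{I\in\mathcal{S}}|I|=bn$; combined with $|I|\leq\alpha(G)$ and $a\alpha(G)=bn$, equality is forced in every term, so each $I\in\mathcal{S}$ has size $\alpha(G)$.

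For the ``if'' direction, suppose $\mathcal{S}$ is a multi-set of $a$ maximum independent sets that covers each vertex exactly $b$ times. Assign to each vertex $v$ the $b$-subset of $\{1,\dots,a\}$ consisting of the indices of members of $\mathcal{S}$ containing $v$; adjacent vertices lie in disjoint color classes, so this is a valid $b$-fold coloring with $a$ colors. Hence $\chi_b(G)\leq a$, and Theorem~\ref{thm:fracChar} gives $\chi_f(G)\leq\chi_b(G)/b\leq a/b$. Counting incidences yields $a\alpha(G)=bn$, i.e. $a/b=n/\alpha(G)$, so $\chi_f(G)\leq n/\alpha(G)$. The reverse inequality is the standard bound noted above, yielding equality.

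I do not anticipate a genuine obstacle: the whole argument is a translation between uniform covers by maximum independent sets and $b$-fold colorings that achieve the fractional chromatic number, with the counting identity $a\alpha(G)=bn$ doing all the work. The only point that deserves a sentence of justification is the routine inequality $\chi_f(G)\geq n/\alpha(G)$, which one could either cite or derive in one line from LP duality.
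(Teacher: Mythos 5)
Your proof is correct and follows essentially the same route as the paper: both directions translate between uniform covers by maximum independent sets and $b$-fold colorings realizing $\chi_f(G)$ via Theorem~\ref{thm:fracChar}, with the incidence count $a\alpha(G)=bn$ forcing the sets to be maximum in one direction and pinning down $\chi_f(G)=n/\alpha(G)$ in the other. The paper even derives your ``standard bound'' $\chi_f(G)\geq n/\alpha(G)$ by the same over-counting argument you sketch.
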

\begin{proof}
First, assuming $\chi_f = \frac{n}{\alpha}$, we prove that there exists a uniform independent set cover whose elements are maximum independent sets. From Theorem \ref{thm:fracChar}, there exists a $b$-fold coloring such that $\chi_b(G) = a$ and we have
\begin{equation}\label{eq:foldColoring}
\chi_f(G) = \frac{n}{\alpha} = \frac{a}{b}.
\end{equation}

Now we construct a uniform maximum independent cover $\mathcal S$ as follows. Let $A$ be the set of colors of size $a$ and $B$ be the function that assigns a $b$-subset of $A$ to every vertex. For every $x\in A$, $I_x=\{v\in V(G) : x\in B(v)\}$ is an independent set. So, ${\mathcal S} = \{I_x : x\in A\}$ is a uniform independent set cover of size $a$. Equation (\ref{eq:foldColoring}) tells us that the average size of independent sets in $\mathcal S$ is $\alpha$ and so they all must be maximum independent sets.

Conversely, assume that $G$ admits a uniform maximum independent set cover $\mathcal S$ such that each vertex $v\in V(G)$ lies in exactly $b$ elements of $\mathcal S$, and so $\frac{|{\mathcal S}|}{b} = \frac{n}{\alpha}$. Then, $B(v) = \{S\in{\mathcal S} : v \in S\}$. is a $b$-fold coloring. Let $b'$ be an integer such that $\chi_f(G) = {\chi_{b'}(G)\over b'}$ and $B'$ be the $b'$-fold coloring that achieves this. Then
\[\frac{n}{\alpha}\leq\frac{\chi_{b'}(G)}{b'}=\chi_f(G)\leq\frac{\chi_b(G)}{b}\leq\frac{|{\mathcal S}|}{b}=\frac{n}{\alpha}.\]
The left-most inequality comes from the fact that $\alpha\chi_{b'}(G)$ is an over-estimation of $\sum_{v\in V(G)} |B'(v)| = nb'$.
\end{proof}

We prove our main result in two steps.
\begin{theorem}\label{thm:entropy}
  For every graph $G$ and every probability distribution $P$ over $V(G)$, if
  $H(G, P) = \lg\chi_f(G[\{v\in V(G) | p_v > 0\}])$, then there exists a multi-set of independent sets
  $\cal S$ such that
  \begin{enumerate}
    \item $\cal S$ is uniform over $\{v\in V(G):p_v > 0\}$, and
    \item every independent set $I\in\mathcal{S}$ is a maximum weighted independent
          sets with respect to $P$.
  \end{enumerate}
\end{theorem}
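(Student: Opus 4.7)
The plan is to reduce to the full-support case and then combine first-order convex optimality with a Jensen-tightness argument. Let $W = \{v \in V(G) : p_v > 0\}$. The reduction $H(G,P) = H(G[W], P|_W)$ is routine: projecting $VP(G)$ onto the $W$-coordinates yields exactly $VP(G[W])$, because every independent set of $G[W]$ is an independent set of $G$ and $I \cap W$ is independent in $G[W]$ for every independent set $I$ of $G$; meanwhile the functional $\sum_v p_v \lg(1/a_v)$ depends only on the $W$-coordinates. Moreover, any max-weight independent set of $G[W]$ with respect to $P|_W$ is also a max-weight independent set of $G$ with respect to $P$. So after replacing $G, P$ by $G[W], P|_W$, it suffices to handle the case where $P$ is strictly positive on $V(G)$ and $H(G,P) = \lg \chi_f(G)$.

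Let $\mathbf{a}^* \in VP(G)$ be the (unique) minimizer in the definition of $H(G,P)$; strict positivity of $P$ and finiteness of the optimum force $a_v^* > 0$ for every $v$. Write $\mathbf{a}^* = \sum_j \lambda_j \chi_{I_j}$ as a convex combination of characteristic vectors of independent sets. First-order optimality for the convex program $\min_{\mathbf{a} \in VP(G)} -\sum_v p_v \lg a_v$ says that the vector $\mathbf{b}$ defined by $b_v := p_v/a_v^*$ satisfies $\mathbf{b}^\top \mathbf{a} \leq \mathbf{b}^\top \mathbf{a}^*$ for every $\mathbf{a} \in VP(G)$. Hence each $I_j$ with $\lambda_j > 0$ achieves the maximum of $\mathbf{b}^\top \chi_I$ over all independent sets $I$, and the common maximum equals $\mathbf{b}^\top \mathbf{a}^* = \sum_v b_v a_v^* = \sum_v p_v = 1$. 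In particular $\mathbf{b}$ is feasible for the LP dual of $\chi_f(G)$ (the constraint $\sum_{v \in I} b_v \leq 1$ for every independent set $I$), so $\sum_v p_v/a_v^* = \sum_v b_v \leq \chi_f(G)$.

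In the opposite direction, Jensen's inequality (concavity of $\lg$) applied to the distribution $P$ and the random variable $1/a_v^*$ gives
\[
\lg \chi_f(G) \;=\; \sum_v p_v \lg(1/a_v^*) \;\leq\; \lg \sum_v p_v/a_v^*,
\]
so $\sum_v p_v/a_v^* \geq \chi_f(G)$. Combining the two bounds forces equality, and the equality case of Jensen---together with strict positivity of $P$---forces $1/a_v^*$ to be constant in $v$; hence $a_v^* = 1/\chi_f(G)$ for every $v$. Then $\mathbf{b} = \chi_f(G)\cdot P$, so each $I_j$ in the decomposition is a max-weight independent set with respect to $P$, and the relation $\sum_{I_j \ni v} \lambda_j = 1/\chi_f(G)$ for every $v$ exhibits a uniform fractional cover by such sets. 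Since $VP(G)$ is rational and $1/\chi_f(G)$ is rational, the $\lambda_j$'s may be chosen rational (a standard LP argument) and then scaled to integers, producing the desired multi-set $\mathcal{S}$. The main obstacle is the two-sided Jensen/LP-duality pincer: matching the LP-dual upper bound with the Jensen lower bound collapses $\mathbf{a}^*$ to the uniform vector, and once that is in hand the KKT identification of the $I_j$'s as max-weight independent sets and the final rationalization step are mechanical.
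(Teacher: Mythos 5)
Your proof is correct, but it travels a genuinely different road from the paper's. The paper starts from an optimal fractional coloring $(\mathcal{I},w)$, forms the point $\mathbf{x}^*=\sum_I \frac{w_I}{\chi_f}\mathbf{I}\in\VP(G)$ with $x^*_v\geq 1/\chi_f$, and argues that any strict inequality at a support vertex would force $H(G,P)<\lg\chi_f$; that gives uniformity, and a separate first-order perturbation $\mathbf{x}^\epsilon=\mathbf{x}^*-\epsilon\mathbf{I}_1+\epsilon\mathbf{M}$ (computing $d'(0)>0$) shows each independent set of positive weight is maximum weighted. You instead start from the optimizer $\mathbf{a}^*$ of the entropy program itself, extract from first-order optimality that $b_v=p_v/a^*_v$ is feasible for the dual of the fractional-coloring LP with $\sum_v b_v\leq\chi_f$, and pincer this against Jensen's inequality $\lg\chi_f=\sum_v p_v\lg(1/a^*_v)\leq\lg\sum_v b_v$ to collapse $\mathbf{a}^*$ to the constant vector $\frac{1}{\chi_f}\mathbf{1}$; the same stationarity condition then identifies every set in the decomposition as maximum weighted. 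Your route derives both conclusions of the theorem from a single duality computation rather than two separate contradiction arguments, and it handles explicitly the reduction to the support $W$ (the paper's proof is written with $\chi_f(G)$ where the statement has $\chi_f(G[W])$, a gap you close). The price is that you must verify differentiability of the objective at $\mathbf{a}^*$ (which your observation $a^*_v>0$ supplies) and add a rationalization step at the end, since your $\lambda_j$ come from an arbitrary convex decomposition; the paper gets integrality essentially for free by starting from a rational optimal fractional coloring. Your rationalization is sound: $\frac{1}{\chi_f}\mathbf{1}$ is a rational point of the integral polytope spanned by the characteristic vectors of maximum weighted independent sets, so it admits a rational convex representation there.
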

\begin{proof}
  Consider a fractional coloring
  $(\mathcal{I}, w)$ of $G$, where $\cal I$ is the family of independent sets of
  $G$ and $w:\mathcal{I}\to\mathbb{Q}^+$ is a weight function such that
  \begin{equation}\label{eq:frac_min}
    \sum_{I\in\mathcal{I}} w_I = \chi_f(G),
  \end{equation}
  and
  \begin{equation}\label{eq:frac_sum}
    \sum_{I \in\mathcal{I}, v\in I} w_I \geq 1,
  \end{equation} for all $v\in V(G)$. We define ${\bf x}^* = \sum_{I\in\mathcal{I}}
  {w_I\over\chi_f(G)}\cdot{\bf I}$, where $\bf I$ is the characteristic vector of $I$.
  Clearly, ${\bf x}^*\in \VP(G)$ due to (\ref{eq:frac_min}), and $x^*_v \geq
  {1\over\chi_f(G)}$, for all $v\in V(G)$, due to (\ref{eq:frac_sum}).
  
  We turn the set $\cal I$ to a multi-set $\cal S$ by setting the multiplicity
  of any independent set $I$ to $r\cdot w_I$, where $r$ is an integer such that
  $r \cdot w_I\in\mathbb{N}$ for all $I\in\mathcal{I}$. If for some $v\in D_P =
  \{v\in V(G):p_v > 0\}$, $x^*_v > {1\over\chi_f(G)}$, then $H(G,P)\leq
  -\sum_{v\in V(G)}p_v\lg x^*_v< \lg\chi_f(G)$, which is a contradiction. So,
  every $v\in D_P$ is in exactly $r\cdot x^*_v\cdot\chi_f(G)=r$ independent sets
  of $\cal S$, which means $\cal S$ is uniform over $D_P$.
  
  We also claim that $P_I = \alpha_P$ for all $I\in\mathcal{I}$, where
  $\alpha_P$ is the weight of maximum weighted independent set of $G$ with
  respect to $P$. Indeed, if there exists an $I_1\in\mathcal{I}$ such that
  $p_{I_1}<\alpha_p$, then we define ${\bf
  x}^\epsilon = {\bf x}^* - \epsilon {\bf I}_1 + \epsilon {\bf M}$, where $\bf M$
  is the characteristic vector of an arbitrary maximum weighted independent set.
  Note that ${\bf x}^\epsilon\in\VP(G)$ for all $0\leq\epsilon\leq w_{I_1}$.

  Now consider
  \[
  d(\epsilon) = \sum_{v\in I_1\setminus M} p_v\lg {x^\epsilon_v\over x^*_v} + \sum_{v\in M\setminus I_1}p_v\lg {x^\epsilon_v\over x^*_v}
  = \lg (1-\epsilon\chi_f(G))\sum_{v\in I_1\setminus M} p_v + \lg(1 + \epsilon\chi_f(G))\sum_{v\in M\setminus I_1}p_v.
  \]
  We have $d(0) = 0$, $d$ is differentiable at zero, and $d'(0) = (\sum_{v\in M\setminus I_1}p_v - \sum_{v\in I_1\setminus M}p_v) \chi_f(G) > 0$.
  So, there exists a positive number $\epsilon^+$ such that $d(\epsilon)$ is positive for all $\epsilon$ in $(0, \epsilon^+)$.
  Then, for an arbitrary $\epsilon \in (0, \min(w_{I_1}, \epsilon^+))$, $H(G, P)\leq -\sum_{v\in V(G)}p_v\lg x^\epsilon_v=-\sum_{v\in V(G)}p_v\lg x^*_v - d(\epsilon) <-\sum_{v\in V(G)}p_v\lg x^*_v = \lg\chi_f(G)$,
  which is a contradiction.
\end{proof}

It is easy to see that the converse of Theorem~\ref{thm:entropy} holds for uniform distributions. We call a graph $G$ \emph{symmetric} if $H(G, U) \geq H(G, P)$ for all
probability distributions $P$ on $V(G)$, where $U$ is the uniform distribution.

\begin{corollary}\label{thm:symmetric}
A graph $G$ is symmetric if and only if $\chi_f(G) = {n\over\alpha(G)}$.
\end{corollary}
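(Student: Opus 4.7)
The plan is to reduce the corollary to two results already in hand: Simonyi's identity $\max_P H(G,P) = \log \chi_f(G)$, which translates ``$G$ is symmetric'' into ``$H(G,U) = \log \chi_f(G)$'', together with Corollary~\ref{thm:uniformCover}, which translates ``$\chi_f(G) = n/\alpha(G)$'' into the existence of a uniform cover of $V(G)$ by maximum independent sets. Theorem~\ref{thm:entropy} specializes cleanly at $P = U$: since $p_v = 1/n > 0$ for every $v$, we have $D_U = V(G)$ and $G[D_U] = G$, and a maximum weighted independent set with respect to uniform weights is simply a maximum independent set of size $\alpha(G)$.

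For the forward direction, I would assume $G$ is symmetric, so Simonyi's identity gives $H(G,U) = \log \chi_f(G)$. Applying Theorem~\ref{thm:entropy} with $P = U$ produces a multi-set $\mathcal{S}$ of independent sets, each of size $\alpha(G)$, that covers $V(G)$ uniformly. Corollary~\ref{thm:uniformCover} then concludes $\chi_f(G) = n/\alpha(G)$.

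For the backward direction, assume $\chi_f(G) = n/\alpha(G)$. Corollary~\ref{thm:uniformCover} supplies a multi-set $\mathcal{S}$ of maximum independent sets covering each vertex in exactly $r$ elements, so a double count gives $|\mathcal{S}| \cdot \alpha(G) = rn$, hence $|\mathcal{S}|/r = \chi_f(G)$. I would then set $\mathbf{a} = |\mathcal{S}|^{-1}\sum_{I \in \mathcal{S}} \mathbf{I}$, which is a convex combination of characteristic vectors of independent sets, hence in $\VP(G)$, with every coordinate equal to $r/|\mathcal{S}| = 1/\chi_f(G)$. Plugging $\mathbf{a}$ into the definition of $H(G,U)$ yields $H(G,U) \leq \log \chi_f(G)$, and since Simonyi's identity bounds $H(G,U)$ above by the same quantity, equality holds. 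Thus $U$ is a maximizer and $G$ is symmetric.

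There is no real obstacle here, only bookkeeping. The one point requiring momentary care is identifying ``maximum weighted independent set with respect to $U$'' with ``maximum independent set in the combinatorial sense'', which is immediate from $p_v \equiv 1/n$; once that identification is made, both directions reduce to direct applications of Theorem~\ref{thm:entropy} and Corollary~\ref{thm:uniformCover}, with a single explicit construction of a point in $\VP(G)$ handling the converse (the remark already noted in the text that ``the converse of Theorem~\ref{thm:entropy} holds for uniform distributions'').
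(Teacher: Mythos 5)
Your forward direction (symmetric $\Rightarrow$ $\chi_f(G)=n/\alpha(G)$) is exactly the paper's argument: apply Theorem~\ref{thm:entropy} at $P=U$, note that maximum weighted independent sets for uniform weights are maximum independent sets, and invoke Corollary~\ref{thm:uniformCover}. That part is fine.

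The backward direction, however, has a genuine logical gap. You construct $\mathbf{a}\in\VP(G)$ with all coordinates equal to $1/\chi_f(G)$ and conclude $H(G,U)\leq\log\chi_f(G)$; you then observe that Simonyi's identity also bounds $H(G,U)$ \emph{above} by $\log\chi_f(G)$, and declare equality. Two upper bounds do not give equality. Indeed, the point with all coordinates $1/\chi_f(G)$ lies in $\VP(G)$ for \emph{every} graph (this is essentially the first paragraph of the proof of Theorem~\ref{thm:entropy}), so your argument as written applies to every graph and hence cannot characterize the symmetric ones. What is needed is the \emph{lower} bound $H(G,U)\geq\log\chi_f(G)$, i.e.\ that no point of $\VP(G)$ beats $\log\chi_f(G)$ for the uniform distribution. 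This is where the hypothesis $\chi_f(G)=n/\alpha(G)$ actually enters, and it is the content of the paper's proof: every $\mathbf{x}\in\VP(G)$ satisfies $\sum_{v}x_v\leq\alpha(G)$ (being a convex combination of characteristic vectors of independent sets), so by the AM--GM inequality $\prod_v x_v\leq\left(\alpha/n\right)^n$, hence
\[
-\frac{1}{n}\sum_{v\in V(G)}\lg x_v\;\geq\;-\lg\frac{\alpha}{n}\;=\;\lg\chi_f(G),
\]
which gives $H(G,U)\geq\lg\chi_f(G)$ and hence, with Simonyi's upper bound, equality. Your uniform-cover construction and the double count $|\mathcal{S}|\alpha=rn$ are correct but do not supply this step; replace that portion with the AM--GM (or Jensen) argument and the proof closes.
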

\begin{proof}
  If $\chi_f(G) = {n\over\alpha}$, then
  for every ${\bf x} \in \VP(G)$ we have
  \[\sum_{v\in V(G)} x_v\leq\alpha\Rightarrow \prod_{v\in V(G)} x_v \leq
  \left({\alpha\over n}\right)^n\Rightarrow -{1\over n}\sum_{v\in V(G)}\lg x_v
  \geq -{1\over n}\lg\left({\alpha\over n}\right)^n=-\lg{\alpha\over n} = lg\chi_f(G).\]
  This means $H(G, u) = \lg \chi_f(G)$, and so $G$ is symmetric.
  
  On the other hand, if $G$ is symmetric, then, according to
  Theorem~\ref{thm:entropy}, there exists a uniform maximum independent set cover. Thus from Theorem \ref{thm:uniformCover}, we have $\chi_f(G) = {n\over\alpha}$.
\end{proof}

Now we prove the converse of Theorem $\ref{thm:entropy}$ for every distribution.

\begin{theorem}
Let $G$ be a graph with probability distribution $P$ on its vertex set. Suppose that there exists a multi-set of independent sets
  $\cal S$ such that
  \begin{enumerate}
    \item $\cal S$ is uniform over $\{v\in V(G):p_v > 0\}$, and
    \item every independent set $I\in\mathcal{S}$ is a maximum weighted independent
          sets with respect to $P$.
  \end{enumerate}
  Then $H(G,P) = \log \chi_f(G[\{v\in V(G): p_v > 0\}])$.
\end{theorem}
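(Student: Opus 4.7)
The plan is to sandwich $H(G,P)$ between two matching bounds: one from Simonyi's theorem giving the upper bound, and one from a Jensen-type estimate driven by the hypotheses on $\mathcal{S}$ giving the lower bound. Let $D_P := \{v\in V(G): p_v > 0\}$ and write $\alpha_P$ for the maximum $P$-weight of an independent set of $G$; by assumption $P(I) = \alpha_P$ for every $I\in\mathcal{S}$. Since $p_v=0$ outside $D_P$, and since any $\mathbf{a}\in \VP(G)$ restricts to a vector of $\VP(G[D_P])$ (and conversely extends), one has $H(G,P) = H(G[D_P], P|_{D_P})$. Simonyi's theorem, already recorded in the excerpt, then yields
\[
H(G,P) \;=\; H(G[D_P], P|_{D_P}) \;\leq\; \log\chi_f(G[D_P]).
\]

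For the matching lower bound I would first perform a double count. If $r$ is the uniform multiplicity and $s := |\mathcal{S}|$, then summing $P(I) = \alpha_P$ over $I\in\mathcal{S}$ in two ways gives $s\alpha_P = \sum_{v\in D_P} p_v\cdot r = r$, so $\alpha_P = r/s$. Assigning weight $1/r$ to every element of $\mathcal{S}$ (intersected with $D_P$) produces a fractional coloring of $G[D_P]$ of total weight $s/r = 1/\alpha_P$, so $\chi_f(G[D_P]) \leq 1/\alpha_P$, i.e. $\log\chi_f(G[D_P]) \leq -\log\alpha_P$.

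Next I would apply Jensen's inequality to the entropy itself. Any $\mathbf{a}\in \VP(G[D_P])$ is a convex combination $\mathbf{a}=\sum_I \lambda_I \mathbf{I}$ of characteristic vectors of independent sets $I\subseteq D_P$, so
\[
\sum_{v\in D_P} p_v a_v \;=\; \sum_I \lambda_I P(I) \;\leq\; \alpha_P.
\]
Concavity of $\log$ with weights $p_v$ (which sum to $1$ on $D_P$) gives $-\sum_{v\in D_P} p_v \log a_v \geq -\log\sum_{v\in D_P} p_v a_v \geq -\log\alpha_P$. Taking the minimum over $\mathbf{a}$, $H(G,P) \geq -\log\alpha_P \geq \log\chi_f(G[D_P])$, which matches the upper bound and closes the proof.

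I do not anticipate any real obstacle; the argument is bookkeeping around the support of $P$ together with a single Jensen step. The only points requiring mild care are verifying that restricting each $I\in\mathcal{S}$ to $D_P$ keeps it independent in $G[D_P]$ (immediate) and checking that $I\cap D_P$ still has $P$-weight $\alpha_P$ (since $p_v=0$ on $I\setminus D_P$), so that the double count and the fractional coloring derived from $\mathcal{S}$ behave as claimed.
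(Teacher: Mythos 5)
Your proof is correct, and it takes a genuinely different and considerably more direct route than the paper's. The paper first treats rational distributions: it blows each vertex $v$ up into $n_v$ copies (where $p_v=n_v/m$), uses the substitution lemma (Lemma~\ref{lem:Subs}) to show that the blown-up graph $G'$ under the uniform distribution has the same entropy, applies the characterization of symmetric graphs (Corollary~\ref{thm:symmetric}) to $G'$, and then extends to arbitrary real $P$ by proving continuity of $H(G,\cdot)$ and taking limits along rational approximations. You replace all of this with a two-sided bound: Simonyi's theorem gives $H(G,P)\leq\log\chi_f(G[D_P])$; the multiset $\mathcal S$, with each set given weight $1/r$, is a fractional coloring of $G[D_P]$ of total weight $s/r=1/\alpha_P$, so $\log\chi_f(G[D_P])\leq-\log\alpha_P$; and Jensen's inequality applied to the concave function $\log$ gives $H(G,P)\geq-\log\alpha_P$ for \emph{every} distribution, since any point of the vertex packing polytope is a convex combination of characteristic vectors of independent sets, each of $P$-weight at most $\alpha_P$. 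Chaining the three inequalities closes the sandwich. Your approach buys the elimination of the rationality assumption and of the continuity-and-limit argument (which in the paper is somewhat delicate --- for instance the chosen $\delta$ there divides by $H(G,P)$, which can be zero), and it identifies the common value explicitly as $-\log\alpha_P$; the paper's approach buys a transparent reduction, via substitution, to the uniform case already settled in Corollary~\ref{thm:symmetric}. The only points worth spelling out in a final write-up are the identity $H(G,P)=H(G[D_P],P|_{D_P})$ (terms with $p_v=0$ vanish, and vectors of the two vertex packing polytopes restrict and extend to one another) and the fact that $\alpha_P>0$; you note both, and both are immediate.
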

\begin{proof}
First we assume that the probability of every vertex $v\in V(G)$, i.e., $p_v$ is equal to $\frac{n_v}{m}$ for some $n_v$ and $m\in\mathbb{N}$. We then construct the graph $G^\prime$ by blowing each vertex $v$ up $n_v$ times and making the corresponding vertices adjacent to the neighbours of $v$. We consider each set of vertices substituted for each vertex $v$ as an independent set $F_v$ of size $n_v$ with uniform distribution $\frac{1}{n_v}$. Then $G^\prime$ is a probabilistic graph with uniform distribution $\frac{\mathbf 1}{m}$ on its vertex set, and repeated application of Lemma \ref{lem:Subs} leads to
\[
H(G^\prime,\frac{\mathbf 1}{m}) = H(G,P) + \sum_{v} \frac{n_v}{m} H(F_v,\frac{\mathbf 1}{n_v}).
\]
Noting that $F_v$ is an independent set, we have
\[
H(F_v, \frac{\mathbf 1}{n_v}) = 0,~\forall~v\in V(G),
\]
and consequently, 
\begin{equation}\label{eq:newG}
H(G^\prime,\frac{\mathbf 1}{m}) = H(G,P).
\end{equation}
Note that the vertex set of $G$ is uniformly covered by maximum weighted independent set with respect to $p$. Therefore, due to the construction of $G^\prime$, graph $G^\prime$ is uniformly covered by its maximum independent sets. Thus using Theorem \ref{thm:symmetric} and equation (\ref{eq:newG}), we have
\begin{equation}\label{eq:fract}
H(G^\prime,\frac{\mathbf 1}{m}) = \log \chi_f(G^\prime) = H(G,P) \leq \log\chi_f(G).
\end{equation} 
Noting that there is a homomorphism from $G$ to $G^\prime$, we get
\[
\chi_f(G)\leq\chi_f(G^\prime).
\]
This along with (\ref{eq:fract}) implies that $\chi_f(G) = \chi_f(G^\prime)$, and hence, probability distribution $p$ over the vertex set of $G$ maximizes $H(G,P)$.

Now, suppose that $P$ is a real probability distribution over the vertex set of $G$. Now we show that graph entropy $H(G,P)$ is a continuous function of $P$. Let $P$ be a strictly positive probability distribution over $V(G)$. 
Then for every $\epsilon>0$ we set
\[
\delta = \frac{1}{2} \min_{v\in V} p_v \times\min \left(1,\frac{\epsilon}{|V(G)|H(G,P)}\right).
\]   
Therefore, we have 
\[
\|P - P^\prime\|<\delta \rightarrow \|H(G,P^\prime) - H(G,P)\|<\epsilon.
\]
First we show $H(G,P^\prime)<H(G,P) +\epsilon$. Let $\mathbf{a}^*\in VP(G)$ achieves $H(G,P)$, that is
\[
H(G,P) = \sum_vp_v\log\frac{1}{a^*_v}.
\] 
Thus
\begin{eqnarray}
&&\left(\min_v p_v\right)\times\sum_v\log\frac{1}{a_v^*}\leq H(G,P)\nonumber\\
&&\Rightarrow \sum\log\frac{1}{a^*_v}\leq \frac{H(G,P)}{\min_v p_v}\Rightarrow \log\frac{1}{a^*_v}\leq\frac{H(G,P)}{\min_v p_v},~\forall v\in V.
\end{eqnarray}
On the other hand, setting $\delta_v = p^\prime_v - p_v$, we have
\begin{eqnarray}
H(G,P^\prime) &=& \min_{\mathbf a\in VP(G)}\sum_v(p_v+\delta_v)\log\frac{1}{a_v}\nonumber\\
&\leq &\sum_v(p_v+\delta_v)\log\frac{1}{a^*_v} = H(G,P) + \sum_v\delta_v\log\frac{1}{a^*_v}\nonumber\\
&\leq &H(G,P) + |V(G)|\times\delta\times \frac{H(G,P)}{\min_v p_v} < H(G,P) +\frac{\epsilon}{2}.
\end{eqnarray}
Now we show that $H(G,P^\prime)>H(G,P) - \epsilon$. Let $\mathbf{b}^*\in VP(G)$ achieves $H(G,P^\prime)$, that is
\begin{equation}\label{eq:entropy2}
H(G,P^\prime) = \sum_vp^\prime_v\log\frac{1}{b^*_v}.
\end{equation}
If $H(G,P)\leq H(G^\prime, P)$, we are done. Thus we may assume $H(G,P)>H(G,P^\prime)$. Then using the value for $\delta$ defined above we have
\[
\log\left(\frac{1}{b^*_v}\right)\leq\frac{H(G,P^\prime)}{\min_v p_v^\prime}\leq \frac{2H(G,P)}{\min_v p_v^\prime},~\forall v\in V(G).
\]
Now using the above equation and equation (\ref{eq:entropy2}), we get
\begin{eqnarray}
H(G,P^\prime)&=&\sum_vp_v\log\frac{1}{b^*_v}+\sum_v\log\frac{1}{b^*_v}\nonumber\\
&\geq& H(G,P) + \sum -|\delta_i|\log\frac{1}{b^*_v}\nonumber\\
&\geq& H(G,P) - |V(G)|\times 2\times\frac{H(G,P)}{\min_vp_v}\times\delta\nonumber\\
&\geq& H(G,P) - \epsilon.
\end{eqnarray}
Thus $H(G,P)$ is a continuous function. Now note that there exists a sequence of  rational probability distribution $P_k$ which tends to $P$ as $k\rightarrow \infty$. Consequently, there exists a sequence of graphs $G^\prime_k$ with the corresponding sequence of uniform probability distributions $\frac{\mathbf 1}{m_k}$ constructed as explained above. Then, noting that $H(G,P)$ is a continuous function with respect to $P$ and using (\ref{eq:fract}), we have
\begin{equation}\label{eq:limit1}
\lim_{k\rightarrow \infty }H(G^\prime_k,\frac{\mathbf 1}{m_k}) =\lim_{k\rightarrow \infty } \log \chi_f(G^\prime_k) = \lim_{k\rightarrow \infty }H(G,P_k) = H(G,P) \leq \log\chi_f(G).
\end{equation}  
Since there is a homomorphism from $G$ to $G^\prime_k$ for every $k$, we get
\begin{equation}\label{eq:limit2}
\chi_f(G)\leq\lim_{k\rightarrow \infty } \chi_f(G^\prime_k).
\end{equation}
Therefore, (\ref{eq:limit1}) and (\ref{eq:limit2}) imply that probability distribution $P$ over the vertex set of $G$ maximizes $H(G,P)$.
\end{proof}
\subsection{Computational Complexity}
In this section, we discuss the complexity of computing graph entropy by proving the following theorem.
\begin{theorem}
  It is co-NP-hard to decide whether a given graph $G$ is symmetric.
\end{theorem}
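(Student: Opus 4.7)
My plan is to prove co-NP-hardness by a polynomial-time reduction from the complement of Maximum Independent Set: given $(H, k)$, is $\alpha(H) \leq k$? This is a canonical co-NP-complete problem. By Corollary~\ref{thm:symmetric}, $G$ is symmetric iff $\chi_f(G)\alpha(G) = n(G)$, and since the reverse inequality $\chi_f(G)\alpha(G) \ge n(G)$ always holds, we are really asking whether this product attains its forced lower bound. This makes the symmetry test well-suited for encoding threshold questions about $\alpha$.

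Given an input $(H, k)$, I would construct $G = G(H,k)$ in polynomial time so that $G$ is symmetric iff $\alpha(H) \leq k$. The construction combines $H$ with a vertex-transitive (hence automatically symmetric) gadget $T$, exploiting the parameter rules for disjoint union,
\[
n(H \sqcup T) = n(H) + n(T), \qquad \alpha(H \sqcup T) = \alpha(H) + \alpha(T), \qquad \chi_f(H \sqcup T) = \max\bigl(\chi_f(H),\chi_f(T)\bigr).
\]
Writing $\chi_f(T) = n(T)/\alpha(T) = c$ and assuming $c \geq \chi_f(H)$, the symmetry equation for $H \sqcup T$ collapses to $c\,\alpha(H) = n(H)$; because $\alpha(H) \geq n(H)/\chi_f(H) \geq n(H)/c$ holds automatically, this is equivalent to the inequality $\alpha(H) \leq n(H)/c$. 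Taking $T$ from a family of vertex-transitive graphs whose fractional chromatic number realizes arbitrary rational values (for instance, Kneser graphs), together with a padding of $H$, allows one to tune $n(H)/c$ to equal the threshold $k$ exactly.

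The main obstacle is two-fold. First, $\chi_f(H)$ is not known in advance, so guaranteeing the side condition $c \geq \chi_f(H)$ requires preprocessing---e.g., adding a batch of isolated vertices, which preserves $\chi_f$ while shifting $n$ and $\alpha$ by the same amount, creating the needed arithmetic slack. Second, the plain disjoint-union calculation above actually forces the stronger condition that the padded $H$ itself be symmetric with $\chi_f$ equal to $c$, which would collapse the reduction into an a priori hard symmetry question about $H$. To avoid this circularity, the construction likely must go beyond a simple disjoint union---for example, by substituting $H$ into a vertex of a large vertex-transitive graph, or by joining $H$ with a carefully chosen vertex-transitive component so that the parameters $n, \alpha, \chi_f$ of the combined graph decouple appropriately. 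Once a suitable gadget is fixed, the equivalence will follow from Corollary~\ref{thm:symmetric} in one direction and an explicit uniform independent-set cover (via Corollary~\ref{thm:uniformCover}) in the other, and polynomial-time computability of the construction will be immediate, completing the co-NP-hardness argument.
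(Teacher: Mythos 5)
Your plan correctly identifies the two ingredients any proof here needs: the characterization of symmetry as $\chi_f(G)=n(G)/\alpha(G)$ (Corollary~\ref{thm:symmetric}) and a reduction from an independent-set threshold question. But the proposal stops short of a proof: the one concrete gadget you describe, the disjoint union $H\sqcup T$ with a vertex-transitive $T$, collapses exactly as you yourself observe. Since $c=\chi_f(T)\ge\chi_f(H)\ge n(H)/\alpha(H)$, the symmetry condition $c\,\alpha(H)=n(H)$ forces $c=\chi_f(H)=n(H)/\alpha(H)$, i.e.\ it forces the padded $H$ to already be symmetric with a prescribed fractional chromatic number; and the arithmetic with blow-ups and isolated vertices cannot simultaneously make $c\ge\chi_f(H)$ (which, absent knowledge of $\chi_f(H)$, requires $c$ of order $n(H)$) and place the threshold $n(H')/c$ at $k$. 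Your escape routes (``substitute $H$ into a vertex of a vertex-transitive graph,'' ``join with a carefully chosen component'') are only gestured at, so the reduction is not actually exhibited. That missing gadget is the entire content of the theorem.

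For comparison, the paper's construction takes two sets $A,B$ of size $k-1$ and forms $G$ on $A\cup (V(F)\times B)$, where $A$ is an independent set completely joined to $V(F)\times B$, and inside $V(F)\times B$ the vertices $(v,b),(v',b')$ are adjacent when $v\ne v'$ and $b\ne b'$, or when $b=b'$ and $vv'\in E(F)$. The point of this adjacency rule is that every independent set of $V(F)\times B$ lives entirely in one ``column'' $\{v\}\times B$ (size $k-1$) or one ``row'' $V(F)\times\{b\}$ (size at most $\alpha(F)$), so $\alpha(G)=\max\{\alpha(F),k-1\}$, while the columns together with $A$ always form a uniform cover by independent sets of size exactly $k-1$. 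Hence if $\alpha(F)\le k-1$ this is a uniform cover by maximum independent sets and $G$ is symmetric; if $\alpha(F)\ge k$, every maximum independent set misses the dominating set $A$, so no uniform cover by maximum independent sets exists and $G$ is not symmetric. This decouples the parameters in precisely the way your disjoint-union attempt could not: the cover by size-$(k-1)$ independent sets exists unconditionally, and only the value of $\alpha(G)$ depends on $F$. You would need to supply a gadget with this property (or an equivalent one) to complete your argument.
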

\begin{proof}
 We use a reduction from
  the maximum independent set problem. Assume we are given a graph $F$ and an
  integer $k$, and let $A$ and $B$ be two disjoint sets of size $k - 1$ disjoint
  from $V(F)$. Then, graph $F$ has an independent set of size at least $k$ if and only
  if the graph $G$, defined below, is not symmetric:
  \[V(G) = A \cup V(F)\times B,\]
  \begin{eqnarray*}
  E(G) &=& \{\{a, (v, b))\}: a\in A, v\in V(F), b\in B\}\cup\\
       &&  \{\{(v, b), (v', b')\}: v, v'\in V(F), b,b'\in B, v\neq v', b\neq b'\} \cup\\
       &&  \{\{(v, b), (v', b)\}: (v, v')\in E(F), b\in B\}
  \end{eqnarray*}
  To see this, it is enough to note that $\alpha(G) = \max\{\alpha(F), k - 1\}$
  and $G$ has a uniform independent set cover whose independents sets are all of
  size $k - 1$. If $\alpha(F)\leq k -1$, then $\alpha(G) = k - 1$ and due to the construction of $G$ the vertex set of $G$ is covered uniformly by its maximum size independent sets. Therefore, using Theorem \ref{thm:symmetric}, graph $G$ is symmetric with respect to graph entropy. Conversely, if $\alpha(F)\geq k $, then since the vertices of $G$ in $A$ are adjacent to all vertices in $V(G)\setminus A$, the vertex set of $G$ is not covered uniformly by its maximum size independent sets. Consequently, from Theorem \ref{thm:symmetric}, graph $G$ is not symmetric with respect to graph entropy.  
\end{proof}



\end{document}